\newtheorem{theorem}{Theorem}
\newtheorem{lemma}[theorem]{Lemma}
\newtheorem{proposition}[theorem]{Proposition}
\newtheorem{corollary}[theorem]{Corollary}
\newenvironment{proof}{    
  \noindent
  \textbf{Proof.}}{
  \hfill $\Box$
  \vspace{3mm}
}
\numberwithin{equation}{section}
\newcommand{\N}{\mathbb{N}} 
\newcommand{\R}{\mathbb{R}} 
\newcommand{\C}{\mathbb{C}} 
\newcommand{\D}{\mathbb{D}} 
\newcommand{\T}{\mathbb{T}} 
\begin{document}

\title{The spectrum of composition operators induced by a rotation in the space of all analytic functions on the disc }

\author{Jos\'{e} Bonet}

\date{}

\maketitle

\begin{abstract}
A characterization of those points in the unit disc which belong to the spectrum of a composition operator $C_{\varphi}$, defined by a rotation $\varphi(z)=rz$ with $|r|=1$, on the space $H_0(\D)$ of all analytic functions on the unit disc which vanish at $0$, is given. Examples show that the point $1$ may or may not belong to the spectrum of $C_{\varphi}$, and this is related to Diophantine approximation. Our results complement recent work by Arendt, Celari\`es and Chalendar.
\end{abstract}

\renewcommand{\thefootnote}{}
\footnotetext{\emph{2010 Mathematics Subject Classification.}
Primary: 47B33, secondary: 47A10; 46E10; 11K60  }%
\footnotetext{\emph{Key words and phrases.} Composition operator; space of analytic functions; rotation; Diophantine number  }%



The aim of this note is to present a few results about the spectrum $\sigma(C_{\varphi_r}, H_0(\D))$ of composition operators, defined by a self map of the unit disc $\D$ given by $\varphi_r(z):=rz$, for a complex number $r$ with $|r|=1$,  when acting on the space $H_0(\D)$ of all analytic functions on the disc $\D$ vanishing at $0$. We prove in Theorem \ref{main} that there exist $r,s \in \C$ with $|r|=|s|=1$, which are not roots of unity, such that the corresponding self-maps $\varphi_r$ and $\varphi_s$ satisfy $1 \notin \sigma(C_{\varphi_r}, H_0(\D))$ and $1 \in \sigma(C_{\varphi_s}, H_0(\D))$. The example of $r$ uses Diophantine irrational numbers, see Proposition \ref{diophantine}; whereas the example of $s$ as indicated requires a construction in Proposition \ref{construction} of irrational numbers $x$ with a sequence of rational numbers very rapidly converging to $x$.

Here the space $H(\D)$ of all analytic functins on $\D$ is endowed with the complete metrizable locally convex topology of uniform convergence on the compact subsets of $\D$. An analytic self-map  $\varphi$ of $\D$ defines a continuous composition operator $C_{\varphi}: H(\D) \rightarrow H(\D), \ C_{\varphi}(f):= f \circ \varphi$ for each $f \in H(\D)$. The monographs of Cowen and   \cite{CM} and Shapiro \cite{Shap} are standard references for composition operators on spaces of analytic functions.

Let $T:X \rightarrow X$ be a continuous linear operator on a Fr\'echet space $X$, i.e.\ a complete metrizable locally convex space. We write $T \in \mathcal{L}(X)$. The \textit{resolvent set} $\rho(T)$ of $T$ consists of all $\lambda\in\C$ such that $R(\lambda,T):=(\lambda I- T)^{-1}$ is a continuous linear operator, that is $T - \lambda I: X \rightarrow X$ is bijective and has a continuous inverse. Here $I$ stands for the identity operator on $X$. The set  $\sigma(T):=\C\setminus \rho(T)$ is called the \textit{spectrum} of $T$. The \textit{point spectrum} $\sigma_{pt}(T)$ of $T$ consists of all $\lambda\in\C$ such that $(\lambda I-T)$ is not injective. If we need to stress the space $X$, then we  write $\sigma(T;X)$, $\sigma_{pt}(T;X)$ and $\rho(T;X)$. Unlike for Banach spaces $X$, it may happen that $\rho(T)=\emptyset$ or that $\rho(T)$ is not open.  This is the reason why some authors (see e.g.\ \cite{V} and \cite{ABR}) prefer to consider  $\rho^*(T)$ instead of $\rho(T)$ consisting of all $\lambda\in\C$ for which  there exists $\delta>0$ such that $B(\lambda,\delta):=\{z\in\C\colon |z -\lambda| < \delta\} \subset \rho(T)$ and $\{R(\mu,T) \colon \ \mu\in  B(\lambda,\delta)  \}$ is equicontinuous in $\mathcal{L}(X)$. The \textit{Waelbroeck spectrum} is defined by $\sigma^*(T):=\C\setminus \rho^*(T)$. It  is a closed set  containing $\sigma(T)$. The reader is referred to the book of Meise and Vogt \cite{Meise_Vogt} for functional analysis and Fr\'echet spaces.

Arendt, Celari\`es and Chalendar \cite[Theorem 4.1]{ACC} prove that if $\varphi: \D \rightarrow \D$ is an analytic map with an interior fixed point $\varphi(a)=a \in \D$ and $0 < |\varphi'(a)| < 1$, then the spectrum of the composition operator $C_{\varphi}$ is
$$
\sigma(C_{\varphi}, H(\D)) = \sigma^*(C_{\varphi}, H(\D)) = \{ 0 \} \cup \{ \varphi'(a)^n \ : \ n=0,1,2,... \}.
$$
Moreover, in case $\varphi'(a)=0$, then \cite[Theorem 4.10]{ACC} implies that $\sigma(C_{\varphi}, H(\D)) = \sigma^*(C_{\varphi}, H(\D)) = \{ 0,1 \}$.

The eigenvalues and spectrum of composition operators on Banach spaces of analytic functions has been investigated by many authors. We refer the reader to \cite{ACC}, \cite{AL} and \cite{Shapart} and the references therein.

If $\varphi$ is an automorphism with an interior fixed point $\varphi(a)=a \in \D$,
then $0 \notin \sigma(C_{\varphi}, H(\D))$, as follows for example from \cite[Corollary 2.4]{ACC}. Moreover, the Moebius transform $\psi_a(z):=(a-z)/(1-\overline{a}z))$ satisfies $\psi_a(a)=0$ and $\psi_a^{-1} = \psi_a$, and
$$
\sigma(C_{\psi_a} C_{\varphi} C_{\psi_a}, H(\D)) = \sigma(C_{\varphi}, H(\D)).
$$
Accordingly to investigate the spectrum in this case it is enough to consider the case of
rotations $\varphi_r(z):=rz, z \in \D,$ for each $r \in \C$ with $|r|=1$.

In the rest of the article we denote by $H_0(\D)$ the space of all the analytic functions on $\D$ such that $f(0)=0$, which is a closed hyperplane of $H(\D)$, hence a Fr\'echet space when endowed with the induced topology. Clearly $C_{\varphi_r}(H_0(\D)) \subset H_0(\D)$ for each $|r|=1$. We give several results about the spectrum of the composition operator $C_{\varphi_r}$ when acting on $H_0(\D)$ which complement the aforementioned  theorems in \cite{ACC}.

Our first result collects several general facts. From now on we denote by $\T$ the unit circle.

\begin{proposition}\label{generalfacts}
Let $\varphi_r$ be the self map of $\D$ given by $\varphi_r(z):=rz, z \in \D$ for some $r \in \T$. Then
\begin{itemize}
\item[(1)] $\sigma_{pt}(C_{\varphi_r}, H(\D)) = \{ r^n \ | \ n=0,1,2,...  \}$.

\item[(2)] $\sigma^*(C_{\varphi_r}, H(\D))$ is contained in the unit circle $\T$. The same holds for $\sigma^*(C_{\varphi_r}, H_0(\D))$

\item[(3)] If $r$ is a root of unity and $m$ is the minimum positive integer such that  $r^m = 1$, then
$$\sigma(C_{\varphi_r}, H(\D)) = \sigma^*(C_{\varphi_r}, H(\D)) = \sigma_{pt}(C_{\varphi_r}, H(\D)) = \{1, r, r^2,...r^{m-1} \}.$$ Moreover ${\rm Ker}(C_{\varphi_r} - r^j I)$ is infinite dimensional for each $j=0,...,m-1$.
The same result holds for $H_0(\D)$ instead of $H(\D)$.

\item[(4)] If $r$ is not a root of unity, then $\sigma_{pt}(C_{\varphi_r}, H_0(\D)) = \{ r^n \ | \ n=1,2,...  \}$, and $\overline{\sigma(C_{\varphi_r}, H_0(\D))} = \sigma^*(C_{\varphi_r}, H_0(\D)) = \T$. The last equality of sets also holds for $H(\D)$.

\end{itemize}
\end{proposition}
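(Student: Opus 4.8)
The entire proposition would be deduced from one structural fact, which I would record at the outset: if $f(z)=\sum_{n=0}^\infty a_n z^n$ then $(C_{\varphi_r}f)(z)=f(rz)=\sum_{n=0}^\infty a_n r^n z^n$, so $C_{\varphi_r}$ acts diagonally on the monomial Schauder basis $\{z^n\}_{n\geq 0}$ of $H(\D)$ (respectively $\{z^n\}_{n\geq 1}$ of $H_0(\D)$), with $z^n\mapsto r^n z^n$. For (1), a nonzero $f$ satisfies $C_{\varphi_r}f=\lambda f$ exactly when $a_n(r^n-\lambda)=0$ for all $n$, which forces $\lambda=r^n$ for some index with $a_n\neq 0$; since conversely each $z^n$ is an eigenvector for $r^n$, this gives $\sigma_{pt}(C_{\varphi_r},H(\D))=\{r^n : n\geq 0\}$.

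For (2) I would fix $\lambda$ with $|\lambda|\neq 1$ and put $c:=\bigl|\,|\lambda|-1\,\bigr|>0$. As $|r^n|=1$, one has $|\lambda-r^n|\geq c$ for every $n$, so the candidate resolvent $R(\lambda,C_{\varphi_r})f:=\sum_n (\lambda-r^n)^{-1}a_n z^n$ is well defined term by term. The work is to see that this is a continuous operator and that the family stays equicontinuous near $\lambda$: using the Cauchy estimates $|a_n|\leq p_\rho(f)\,\rho^{-n}$, where $p_\rho(f):=\sup_{|z|\leq\rho}|f(z)|$, I would obtain for $\rho'<\rho<1$ the bound $p_{\rho'}\bigl(R(\lambda,C_{\varphi_r})f\bigr)\leq c^{-1}(1-\rho'/\rho)^{-1}p_\rho(f)$. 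Since the only dependence on $\lambda$ is through the lower bound $c$, which stays bounded below as $\lambda$ ranges over a small ball disjoint from $\T$, this yields equicontinuity of $\{R(\mu,C_{\varphi_r}):\mu\in B(\lambda,\delta)\}$ and hence $\lambda\in\rho^*$. Thus $\sigma^*(C_{\varphi_r},H(\D))\subseteq\T$, and the same computation restricted to $n\geq 1$ gives the statement for $H_0(\D)$.

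For (3), if $r$ is a primitive $m$-th root of unity then $\varphi_r^{\,m}=\id$, so $C_{\varphi_r}^{\,m}=I$; the values $r^n$ then run precisely through the $m$-th roots of unity $\{1,r,\dots,r^{m-1}\}$, which by (1) is $\sigma_{pt}$. For $\lambda$ not an $m$-th root of unity, factoring $(\lambda^m-1)I=\lambda^m I-C_{\varphi_r}^m=(\lambda I-C_{\varphi_r})\sum_{k=0}^{m-1}\lambda^{m-1-k}C_{\varphi_r}^k$ yields the explicit continuous inverse $R(\lambda,C_{\varphi_r})=(\lambda^m-1)^{-1}\sum_{k=0}^{m-1}\lambda^{m-1-k}C_{\varphi_r}^k$, a polynomial in $C_{\varphi_r}$ depending continuously on $\lambda$, so $\lambda\in\rho^*$; hence all three spectra coincide with $\{1,\dots,r^{m-1}\}$. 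The eigenspace $\mathrm{Ker}(C_{\varphi_r}-r^jI)$ is spanned by $\{z^n : n\equiv j\ (\mathrm{mod}\ m)\}$, which is infinite even after imposing $n\geq 1$. For (4), when $r$ is not a root of unity one has $r^n=r^k$ only for $n=k$, so on $H_0(\D)$ each $r^n$ with $n\geq 1$ has the one-dimensional eigenspace $\spanned\{z^n\}$, while $\lambda=1$ has no eigenvector (the relations $a_n(r^n-1)=0$ with $r^n\neq 1$ force $f=0$); thus $\sigma_{pt}(C_{\varphi_r},H_0(\D))=\{r^n : n\geq 1\}$. The decisive extra ingredient is that the orbit $\{r^n : n\geq 1\}$ is dense in $\T$, i.e.\ the density of orbits of an irrational rotation, so $\T=\overline{\{r^n\}}=\overline{\sigma_{pt}(C_{\varphi_r},H_0(\D))}\subseteq\overline{\sigma}$, and together with $\overline{\sigma}\subseteq\sigma^*\subseteq\T$ from (2) this forces $\overline{\sigma}=\sigma^*=\T$; the same chain with $n\geq 0$ gives $\sigma^*(C_{\varphi_r},H(\D))=\T$.

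I expect the one genuine obstacle to be the Fr\'echet-space bookkeeping in (2): in contrast to the Banach situation one cannot argue with a single operator norm but must manufacture equicontinuous resolvent families from the uniform separation $|\lambda-r^n|\geq c$, tracking the whole scale of seminorms $p_\rho$. Everything else is linear algebra on Taylor coefficients, the only other external input being the classical density of the orbit of an irrational rotation used in (4).
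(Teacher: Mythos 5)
Your proof is correct, and in two places it takes a genuinely different route from the paper's. For part (2) the paper uses the same explicit diagonal inverse, but it establishes equicontinuity of $\{(\,\mu I - C_{\varphi_r})^{-1} : \mu \in B(\lambda,\delta)\}$ by the uniform boundedness principle: it suffices to check that the orbit $\{(\mu I - C_{\varphi_r})^{-1}g : \mu \in B(\lambda,\delta)\}$ is bounded for each fixed $g$, since $H(\D)$ is barrelled. Your route instead derives the quantitative estimate $p_{\rho'}\bigl((\lambda I - C_{\varphi_r})^{-1}f\bigr) \leq c^{-1}(1-\rho'/\rho)^{-1}p_\rho(f)$ from Cauchy estimates; this gives equicontinuity with no appeal to Banach--Steinhaus and makes the dependence on $\dist(\lambda,\T)$ explicit, at the cost of a little more bookkeeping. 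The bigger divergence is part (3): the paper simply reruns the coefficient-estimate argument of (2) with $\delta$ now separating $\mu$ from the finite set $\{1,r,\dots,r^{m-1}\}$, whereas you exploit $C_{\varphi_r}^m = I$ and the factorization $\lambda^m I - C_{\varphi_r}^m = (\lambda I - C_{\varphi_r})\sum_{k=0}^{m-1}\lambda^{m-1-k}C_{\varphi_r}^k$ to exhibit the resolvent as an explicit polynomial in $C_{\varphi_r}$. This is cleaner and avoids all estimates; the only point where your write-up is terse is the claim that this puts $\lambda$ in $\rho^*$, and there you should say explicitly that the resolvents over a small ball avoiding the $m$-th roots of unity form an equicontinuous family because they are linear combinations of the finitely many fixed continuous operators $C_{\varphi_r}^k$ with uniformly bounded scalar coefficients $(\mu^m-1)^{-1}\mu^{m-1-k}$. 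Parts (1) and (4) coincide with the paper's argument: the same Taylor-coefficient computation, and the same use of Kronecker's theorem on the density of $\{r^n : n \in \N\}$ in $\T$, combined with the chain $\overline{\sigma} \subseteq \sigma^* \subseteq \T$ and the closedness of $\sigma^*$.
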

\begin{proof}
(1) Clearly $C_{\varphi_r}(z^n)=r^n z^n$ for each $n=0,1,2,...$. On the other hand, if $f(z)=\sum_{n=0}^{\infty} a_n z^n \in H(\D), \ f \neq 0,$ satisfies $f(rz)= \mu f(z)$ for each $z \in \D$, we have $a_n(r^n - \mu) = 0$ for each $n =0,1,2,...$. Since $f \neq 0$, this implies that $\mu = r^n$ for some $n$.

(2)  We first show that $C_{\varphi_r} - \eta I: H(\D) \rightarrow H(\D)$ is bijective for each $\eta \notin \T$, hence an isomorphism by the closed graph theorem. By part (1), $C_{\varphi_r} - \eta I$ is injective. We show that it is also surjective. Given $g(z)=\sum_{n=0}^{\infty} a_n z^n \in H(\D)$, define $f(z):=\sum_{n=0}^{\infty} \frac{a_n}{r^n - \eta} z^n$. There is $\delta >0$ such that $|\eta - r^n| > \delta$ for each $n=0,1,2,..$. Since the series $\sum_{n=0}^{\infty} |a_n| |z|^n$ converges for each $|z| < 1$, we get that $\sum_{n=0}^{\infty} \frac{|a_n|}{|r^n - \eta|} |z|^n$ also converges for each $|z| < 1$, and $f(z)$ is an analytic function on $\D$ such that $(C_{\varphi_r} - \eta I) f = g$.

In particular we have shown that $$(C_{\varphi_r} - \eta I)^{-1}\left( \sum_{n=0}^{\infty} a_n z^n \right) = \sum_{n=0}^{\infty} \frac{a_n}{r^n - \eta} z^n.$$ for each $\eta \notin \T$ and each $\sum_{n=0}^{\infty} a_n z^n \in H(\D)$.

Now fix $\mu \notin \T$ and select $\delta >0$ such that $|\eta - r| > \delta$ for each $r \in \T$ and each $\eta \in \C$ with $|\eta - \mu| < \delta$. To prove that $\mu \notin \sigma^*(C_{\varphi_r}, H(\D))$, by the uniform boundedness principle,  it is enough to show that the set
$$
B(g):= \{ (C_{\varphi_r} - \eta I)^{-1} g \ | \ |\eta - \mu| < \delta \}
$$
is bounded in $H(\D)$ for each $g = \sum_{n=0}^{\infty} a_n z^n \in H(\D)$. To see this, fix $\alpha \in ]0,1[$. We have
$$
\sup_{|z| \leq \alpha} |(C_{\varphi_r} - \eta I)^{-1} g(z)| = \sup_{|z| \leq \alpha} \left| \sum_{n=0}^{\infty} \frac{a_n}{r^n - \eta} z^n   \right| \leq \delta^{-1} \sum_{n=0}^{\infty} |a_n| \alpha^n.
$$

The proof that $\sigma^*(C_{\varphi_r}, H_0(\D)) \subset \T$ is the same.

(3) Clearly
$$
\{1, r, r^2,...r^{m-1} \} = \sigma_{pt}(C_{\varphi_r}, H(\D)) \subset \sigma(C_{\varphi_r}, H(\D)) \subset \sigma^*(C_{\varphi_r}, H(\D)).
$$
The proof each $\mu \notin \{1, r, r^2,...r^{m-1} \}$ does not belong to $\sigma^*(C_{\varphi_r}, H(\D))$ is similar to the one in part (2), just taking $\delta>0$ such that $|\eta - r^n| > \delta$ for each $n=0,...,m-1$ and each $\eta \in \C$ with $|\eta - \mu| < \delta$.

The space ${\rm Ker}(C_{\varphi_r} - r^j I), j=0,...,m-1,$ of eigenvectors is infinite dimensional because $C_{\varphi_r}(z^{km + j}) = r^j  z^{km + j}$ for each $k \in \N$.

In the case of $H_0(\D)$, observe that, although the constants do not belong to this space, we have $C_{\varphi_r}(z^{km}) = z^{km}$ for each $k \in \N$. The rest of the proof is similar.

(4) If $r$ is not a root of unity, then $r^n \neq 1$ for each $n \in \N$. Since the constants do not belong to $H_0(\D)$, the argument in part (1) yields $\sigma_{pt}(C_{\varphi_r}, H_0(\D)) = \{ r^n \ | \ n \in \N \}$.

Therefore, by part  (2),
$$
\{ r^n \ | \ n \in \N \} \subset \sigma(C_{\varphi_r}, H_0(\D)) \subset \sigma^*(C_{\varphi_r}, H_0(\D)) \subset \T.
$$
By Kronecker's Theorem (see \cite[Theorem 2.2.4]{Queffelec_book} or \cite[Theorem 438]{HW}), $\{ r^n \ | \ n \in \N \}$ is dense in $\T$. This implies
$$
\overline{\sigma(C_{\varphi_r}, H_0(\D))} = \sigma^*(C_{\varphi_r}, H_0(\D)) = \T.
$$
The proof that $\sigma^*(C_{\varphi_r}, H(\D)) = \T$ is obtained with the same argument.
\end{proof}

There are examples of continuous linear operators $T$ on a Fr\'echet space $E$ such that $\overline{\sigma(T,E)}$ is properly contained in $\sigma^*(T,E)$. See Remark 3.5 (vi) in \cite{ABR}. Compare with the statement in Proposition \ref{generalfacts} (4).

\begin{proposition}\label{characterization}
Let $r \in \T$  satisfy that $r^n \neq 1$ for each $n \in \N$.
Let $\varphi_r(z):=rz, z \in \D$.

The following conditions are equivalent for $\lambda \in \T $ with $\lambda \neq r^n$ for each $n \in \N$:
\begin{itemize}

\item[(i)] $\lambda \notin \sigma(C_{\varphi_r}, H_0(\D))$.

\item[(ii)] $\forall \alpha \in ]0,1[ \ \ \ \exists \beta \in ]\alpha, 1[ \ \ \ {\rm such \ that} \ \ \
\sup_{n \in \N} \frac{1}{|r^n - \lambda|} \big(\frac{\alpha}{\beta}   \big)^n < \infty. $
\end{itemize}
\end{proposition}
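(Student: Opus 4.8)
The plan is to read $C_{\varphi_r}$ as a diagonal operator on power-series coefficients and to reduce the claim to the boundedness of a single formal inverse. Since $\lambda \neq r^n$ for every $n \in \N$, Proposition \ref{generalfacts}(1) already gives that $C_{\varphi_r} - \lambda I$ is injective on $H_0(\D)$, and by comparing coefficients the only possible preimage of $g = \sum_{n=1}^{\infty} a_n z^n$ is
$$
S g := \sum_{n=1}^{\infty} \frac{a_n}{r^n-\lambda}\, z^n .
$$
Hence, if $S$ fails to map $H_0(\D)$ into itself then $C_{\varphi_r}-\lambda I$ is not surjective and $\lambda \in \sigma(C_{\varphi_r},H_0(\D))$; conversely, if $S$ is a continuous operator from $H_0(\D)$ into itself, then (being mutually inverse to $C_{\varphi_r}-\lambda I$ on coefficients) it is the two-sided inverse and $\lambda \notin \sigma(C_{\varphi_r},H_0(\D))$, with no appeal to the open mapping theorem. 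Throughout I would use the coefficient seminorms $q_\alpha(f):=\sup_n |a_n|\alpha^n$, for $f=\sum a_n z^n$ and $\alpha \in ]0,1[$, which by the Cauchy estimates (exactly as in the proof of Proposition \ref{generalfacts}(2)) generate the topology of $H_0(\D)$ and satisfy $f\in H_0(\D)$ if and only if $q_\alpha(f)<\infty$ for every $\alpha \in ]0,1[$.

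For $(ii)\Rightarrow(i)$ the required estimate is the one already used in Proposition \ref{generalfacts}(2). Given $\alpha \in ]0,1[$, choose $\beta \in ]\alpha,1[$ as in $(ii)$, so that $M:=\sup_n |r^n-\lambda|^{-1}(\alpha/\beta)^n<\infty$. Then for every $g=\sum a_n z^n\in H_0(\D)$,
$$
q_\alpha(Sg)=\sup_n \frac{|a_n|}{|r^n-\lambda|}\,\alpha^n=\sup_n \big(|a_n|\beta^n\big)\frac{(\alpha/\beta)^n}{|r^n-\lambda|}\le M\, q_\beta(g)<\infty .
$$
As $\alpha$ is arbitrary, $S$ maps $H_0(\D)$ continuously into itself, whence $\lambda\notin\sigma(C_{\varphi_r},H_0(\D))$.

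The substance lies in the converse, which I would prove by contraposition: assuming $(ii)$ fails, I construct $g\in H_0(\D)$ with $Sg\notin H(\D)$. Write $d_n:=|r^n-\lambda|$. Failure of $(ii)$ furnishes one $\alpha \in ]0,1[$ for which every $\beta \in ]\alpha,1[$ satisfies $\sup_n d_n^{-1}(\alpha/\beta)^n=\infty$; fixing such a $\beta$ and setting $t:=\alpha/\beta \in ]0,1[$, the unboundedness of $t^n/d_n$ yields a strictly increasing sequence $(n_k)$ with $d_{n_k}\le t^{n_k}$, i.e. a geometrically decaying subsequence of $(d_n)$. I would then take $g:=\sum_k d_{n_k}^{1/2}\, z^{n_k}$. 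Since $d_{n_k}^{1/(2 n_k)}\le t^{1/2}<1$, one gets $\limsup_n |a_n|^{1/n}\le t^{1/2}<1$, so $g\in H_0(\D)$; on the other hand the nonzero coefficients of $Sg$ are $d_{n_k}^{-1/2}$, and $d_{n_k}\le t^{n_k}$ forces $(d_{n_k}^{-1/2})^{1/n_k}\ge t^{-1/2}>1$, so $Sg$ has radius of convergence strictly less than $1$ and $Sg\notin H(\D)$. Thus $C_{\varphi_r}-\lambda I$ is not surjective and $\lambda\in\sigma(C_{\varphi_r},H_0(\D))$.

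I expect the main obstacle to be this construction: turning the purely asymptotic failure of $(ii)$ into a single function whose formal preimage leaves the space. The delicate points are the passage from $\sup_n d_n^{-1}(\alpha/\beta)^n=\infty$ to a subsequence along which $d_n$ decays geometrically, and the calibration of the exponent — here $1/2$, though any exponent in $]0,1[$ works — so that concentrating $g$ on the sparse support $(n_k)$ keeps $g$ analytic on all of $\D$ while pushing the radius of convergence of $Sg$ below $1$. Once the subsequence is fixed, both $\limsup$ computations are routine, and the direction $(ii)\Rightarrow(i)$ is merely the seminorm estimate already present in Proposition \ref{generalfacts}(2).
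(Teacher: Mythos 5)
Your proof is correct, and while the direction (ii)$\Rightarrow$(i) is essentially the paper's argument, your (i)$\Rightarrow$(ii) takes a genuinely different route. For (ii)$\Rightarrow$(i) both you and the paper estimate the formal inverse $S$ in the coefficient seminorms; the only (immaterial) difference is that you prove continuity of $S$ directly, whereas the paper proves surjectivity of $C_{\varphi_r}-\lambda I$ and invokes the closed graph theorem to obtain a continuous inverse. For (i)$\Rightarrow$(ii) the paper argues directly: condition (i) means $T=(C_{\varphi_r}-\lambda I)^{-1}$ is continuous, continuity with respect to the fundamental system $P_\alpha(\sum a_n z^n)=\sup_n|a_n|\alpha^n$ yields for each $\alpha$ some $\beta\in]\alpha,1[$ and $C>0$ with $P_\alpha(Tf)\le C P_\beta(f)$, and evaluating at the monomials $z^j$ is exactly (ii). You instead prove the contrapositive by a construction: if (ii) fails, then, as you correctly argue (the key point being that every $d_n=|r^n-\lambda|>0$, so unboundedness of $t^n/d_n$ forces $d_n\le t^n$ for infinitely many $n$), there is a lacunary series $g=\sum_k d_{n_k}^{1/2} z^{n_k}$ with radius of convergence at least $t^{-1/2}>1$ whose unique candidate preimage $Sg$ has radius of convergence at most $t^{1/2}<1$, so $C_{\varphi_r}-\lambda I$ is not surjective. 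Your route is longer but buys two things: it dispenses with the closed graph theorem and with the quantitative formulation of continuity between Fr\'echet spaces, and it establishes the stronger fact that failure of (ii) defeats surjectivity itself --- i.e.\ $\lambda$ enters the spectrum for the "hard" reason, consistent with the fact that on a Fr\'echet space a continuous bijection can never have a discontinuous inverse. The paper's route buys brevity: monomial testing converts the continuity estimate into (ii) in two lines.
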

\begin{proof}
By the closed graph theorem, condition (i) holds if and only if $C_{\varphi_r} - \lambda I: H_0(\D) \rightarrow H_0(\D)$ is bijective. Since $\lambda \neq r^n$ for each $n \in \N$, $C_{\varphi_r} - \lambda I$ is injective by Proposition \ref{generalfacts}.

(ii) implies (i). It is enough to show that condition (ii) implies  that $C_{\varphi_r} - \lambda I: H_0(\D) \rightarrow H_0(\D)$ is surjective. Fix $g(z)=\sum_{n=1}^{\infty} a_n z^n \in H_0(\D)$ and define $f(z):=\sum_{n=1}^{\infty} \frac{a_n}{r^n - \lambda} z^n$. Given $\alpha \in ]0,1[$, select $\beta \in ]\alpha, 1[$ as in condition (ii). Hence, for each $|z| \leq \alpha$, we have
$$
\sum_{n=1}^{\infty} \frac{|a_n|}{|r^n - \lambda|} |z|^n   \leq  C \sum_{n=1}^{\infty} |a_n| \beta^n.
$$
Therefore $f \in H_0(\D)$ and, clearly, $(C_{\varphi_r} - \lambda I) f = g$.

(i) implies (ii). Condition (i) implies that the operator $T:=(C_{\varphi_r} - \lambda I)^{-1}: H_0(\D) \rightarrow H_0(\D)$ given by $T(\sum_{n=1}^{\infty} a_n z^n):= \sum_{n=1}^{\infty} \frac{a_n}{r^n - \lambda} z^n$ is continuous. It is well known that a fundamental system of seminorms of the Fr\'echet topology of $H_0(\D)$ is given by $$P_{\alpha}(\sum_{n=1}^{\infty} a_n z^n):= \sup_{n \in \N} |a_n| \alpha^n \ \ \ \ \alpha \in ]0,1[.$$ Therefore the continuity of the operator $T$ implies that for each $\alpha \in ]0,1[$, one can select $\beta \in ]\alpha, 1[$ and $C>0$
such that $$P_{\alpha}(\sum_{n=1}^{\infty} \frac{a_n}{r^n - \lambda} z^n) \leq C P_{\beta}(\sum_{n=1}^{\infty} a_n z^n)$$
for each $\sum_{n=1}^{\infty} a_n z^n \in H_0(\D)$.
Evaluating in each monomial $z^j, j \in \N,$ we get $\frac{1}{|r^j - \lambda|} \alpha^j \leq C \beta^j$. Hence $\sup_{j \in \N} \frac{1}{|r^j - \lambda|} \big(\frac{\alpha}{\beta}   \big)^j < \infty$, which is condition (ii).
\end{proof}

A real number $x \in \R$ is called \textit{Diophantine} if there are $\delta \geq 1$ and $c(x)>0$ such that
$$
\left|x - \frac{p}{q}\right| \geq \frac{c(x)}{q^{1+\delta}}
$$
for all rational numbers $p/q$. See \cite[Definition 3.1.10]{Queffelec_book}. The complement of the set of Diophantine numbers has Lebesgue measure $0$; see \cite[page 43]{CG}. By a Theorem of Liouville (see \cite[Theorem 191]{HW} or \cite[Proposition 3.1.3]{Queffelec_book}) every irrational algebraic number of degree larger or equal 2 is Diophantine.

\begin{corollary}\label{diophantine}
If $x \in \R$ be a Diophantine number, then the rotation $\varphi_r(z)=rz, z \in \D$ for  $r:=e^{i 2 \pi x}$ satisfies that $1 \notin \sigma(C_{\varphi_r}, H_0(\D))$. In particular $1 \in \sigma^*(C_{\varphi_r}, H_0(\D)) \setminus \sigma(C_{\varphi_r}, H_0(\D))$
\end{corollary}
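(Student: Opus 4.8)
The plan is to apply the equivalence of Proposition \ref{characterization} with $\lambda = 1$, so I first check that its hypotheses hold. Since $x$ is Diophantine it must be irrational: if $x = p/q$ were rational, then inserting this very $p/q$ into the defining inequality would force $0 \geq c(x)/q^{1+\delta} > 0$, a contradiction. Hence $r = e^{i 2\pi x}$ is not a root of unity and $r^n \neq 1 = \lambda$ for every $n \in \N$, so Proposition \ref{characterization} applies and it suffices to verify its condition (ii), namely that for each $\alpha \in\ ]0,1[$ there is $\beta \in\ ]\alpha, 1[$ with $\sup_{n \in \N} |r^n - 1|^{-1}(\alpha/\beta)^n < \infty$.

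The heart of the argument is a polynomial lower bound for $|r^n - 1|$ extracted from the Diophantine hypothesis. I would write $|r^n - 1| = |e^{i 2\pi n x} - 1| = 2|\sin(\pi n x)|$ and relate $|\sin(\pi n x)|$ to $\dist(nx, \Z)$ by means of the elementary inequality $|\sin(\pi t)| \geq 2\,\dist(t,\Z)$, which follows from the concavity of $\sin(\pi s)$ on $[0,1/2]$; this gives $|r^n - 1| \geq 4\,\dist(nx,\Z)$. Next, letting $p$ be the integer nearest to $nx$, one has $\dist(nx,\Z) = |nx - p| = n\,|x - p/n|$, and the Diophantine inequality applied to the rational $p/n$ yields $|x - p/n| \geq c(x)/n^{1+\delta}$. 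Combining these two facts produces $\dist(nx,\Z) \geq c(x)/n^{\delta}$, whence
$$
\frac{1}{|r^n - 1|} \leq \frac{n^{\delta}}{4\,c(x)} \qquad (n \in \N).
$$

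With this polynomial bound the verification of (ii) is immediate: given any $\alpha \in\ ]0,1[$ I may take \emph{any} $\beta \in\ ]\alpha,1[$, and then
$$
\frac{1}{|r^n - 1|}\Big(\frac{\alpha}{\beta}\Big)^n \leq \frac{1}{4\,c(x)}\, n^{\delta}\Big(\frac{\alpha}{\beta}\Big)^n,
$$
whose right-hand side tends to $0$ as $n \to \infty$ because the geometric factor with ratio $\alpha/\beta < 1$ dominates the polynomial factor $n^{\delta}$; in particular the supremum over $n$ is finite. Proposition \ref{characterization} then gives $1 \notin \sigma(C_{\varphi_r}, H_0(\D))$. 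Finally, since $r$ is not a root of unity, Proposition \ref{generalfacts}(4) shows $\sigma^*(C_{\varphi_r}, H_0(\D)) = \T \ni 1$, which yields the asserted membership $1 \in \sigma^*(C_{\varphi_r}, H_0(\D)) \setminus \sigma(C_{\varphi_r}, H_0(\D))$.

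The only genuine obstacle is the passage from the Diophantine condition to the lower bound on $|r^n - 1|$, i.e. establishing the chain $|r^n - 1| \geq 4\,\dist(nx,\Z) \geq 4\,c(x)/n^{\delta}$; once the growth of $|r^n-1|^{-1}$ is controlled by a fixed power of $n$, the rest is routine, as a polynomially growing sequence multiplied by a proper geometric sequence is necessarily bounded.
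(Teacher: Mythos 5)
Your proof is correct and takes essentially the same approach as the paper: both verify condition (ii) of Proposition \ref{characterization} for $\lambda = 1$ by establishing a polynomial bound $1/|r^n - 1| \leq D n^{\delta}$ and then observing that $n^{\delta}(\alpha/\beta)^n$ is bounded for any $\beta \in\ ]\alpha,1[$. The only difference is that the paper quotes the lower bound $|r^n - 1| \geq d/n^{\delta}$ from \cite[page 43]{CG}, whereas you derive it directly from the Diophantine condition via $|r^n - 1| = 2|\sin(\pi n x)| \geq 4\dist(nx,\Z) \geq 4c(x)/n^{\delta}$, and you also make explicit two points the paper leaves implicit: the irrationality of $x$ (needed so that Proposition \ref{characterization} applies) and the appeal to Proposition \ref{generalfacts}(4) for the claim $1 \in \sigma^*(C_{\varphi_r}, H_0(\D))$.
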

\begin{proof}
If  $x$ is a Diophantine number, then there are $d>0$ and $\delta \geq 1$ such that $r:=e^{i 2 \pi x}$ satisfies
$|r^n - 1| \geq d/n^{\delta}$ for all $n \in \N$, \cite[page 43]{CG}. Hence
there is $D>0$ such that $1/|r^n - 1| \leq D n^{\delta}$ for each $n \in \N$.
Given $\alpha \in ]0,1[$ take any $\beta \in ]\alpha, 1[$. We have, for each $n \in \N$,
$$
\frac{1}{|r^n - \lambda|} \left(\frac{\alpha}{\beta} \right)^n \leq D \sup_{n \in \N} n^{\delta} \left(\frac{\alpha}{\beta} \right)^n.
$$
This implies $\sup_{n \in \N} \frac{1}{|r^n - \lambda|} \big(\frac{\alpha}{\beta}   \big)^n < \infty$, and the conclusion follows from Proposition \ref{characterization}.

\end{proof}

\begin{lemma}\label{lemma1}
Let $x \in \R$ be an irrational number such that there are $\alpha \in ]0,1[$ and a sequence of rational numbers $(p_j/q_j)_{j=1}^{\infty}$ with $\lim_{j \rightarrow \infty} q_j = + \infty$ such that
$$
\left| x - \frac{p_j}{q_j}  \right| \leq \alpha^{q_j}
$$
for each $j \in \N$. Then the rotation $\varphi_r(z):=rz, z \in \D,$ for $r:=e^{i 2 \pi x}$ satisfies $1 \in \sigma(C_{\varphi_r}, H_0(\D))$.
\end{lemma}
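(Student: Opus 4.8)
The plan is to apply Proposition \ref{characterization} with $\lambda = 1$ and show that its condition (ii) \emph{fails}. Since $x$ is irrational, $r = e^{2\pi i x}$ is not a root of unity, so $r^n \neq 1$ for every $n \in \N$ and in particular $\lambda = 1 \neq r^n$ for all $n$; thus the proposition applies, and the failure of (ii) will immediately give $1 \in \sigma(C_{\varphi_r}, H_0(\D))$.

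The heart of the argument is an estimate showing that $|r^{q_j} - 1|$ is exponentially small in $q_j$. Writing $\eps_j := x - p_j/q_j$, the hypothesis gives $|\eps_j| \leq \alpha^{q_j}$, and $q_j x = p_j + q_j \eps_j$ with $p_j \in \Z$. Using $|e^{2\pi i t} - 1| = 2|\sin(\pi t)| \leq 2\pi |t|$, I would obtain
$$
|r^{q_j} - 1| = \left| e^{2\pi i q_j x} - 1 \right| = \left| e^{2\pi i q_j \eps_j} - 1 \right| \leq 2\pi q_j |\eps_j| \leq 2\pi q_j \alpha^{q_j}.
$$

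To negate (ii), I would take the value $\alpha$ from the hypothesis of the lemma as the test parameter in the universally quantified statement. Condition (ii) for this $\alpha$ would require some $\beta \in ]\alpha, 1[$ with $\sup_n \frac{1}{|r^n-1|}(\alpha/\beta)^n < \infty$; I claim no such $\beta$ exists. Indeed, for any $\beta \in ]\alpha, 1[$, restricting the supremum to the indices $n = q_j$ and inserting the estimate above yields
$$
\frac{1}{|r^{q_j} - 1|} \left( \frac{\alpha}{\beta} \right)^{q_j} \geq \frac{1}{2\pi q_j \alpha^{q_j}} \left( \frac{\alpha}{\beta} \right)^{q_j} = \frac{1}{2\pi q_j} \cdot \frac{1}{\beta^{q_j}}.
$$
Since $0 < \beta < 1$ and $q_j \to +\infty$, the exponential growth of $\beta^{-q_j}$ dominates the linear factor $q_j$, so the right-hand side tends to $+\infty$ along $j$. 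Hence $\sup_{n} \frac{1}{|r^n-1|}(\alpha/\beta)^n = +\infty$ for every admissible $\beta$, so condition (ii) fails for this $\alpha$, and by Proposition \ref{characterization} we conclude $1 \in \sigma(C_{\varphi_r}, H_0(\D))$.

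The only delicate point is the bookkeeping of the quantifiers in (ii): the key is to choose the test parameter equal to the hypothesis's $\alpha$, so that the ratio becomes $\alpha/(\alpha\beta) = 1/\beta > 1$ for \emph{every} $\beta \in ]\alpha,1[$. This is precisely what allows the very fast rational approximation (which makes $|r^{q_j}-1|$ beat the geometric weight $\alpha^{q_j}$) to overwhelm the decay $(\alpha/\beta)^{q_j}$ uniformly in $\beta$. The trigonometric estimate itself is routine, and the condition $|\eps_j| \le \alpha^{q_j}$ is exactly strong enough to cancel the $\alpha^{q_j}$ appearing in the seminorm comparison.
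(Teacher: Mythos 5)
Your proof is correct, and its core --- the estimate $|r^{q_j}-1| \le 2\pi|q_j x - p_j| \le 2\pi q_j \alpha^{q_j}$ together with the exponential-versus-linear comparison along the subsequence $n = q_j$ --- coincides with the paper's. The difference lies in how that core is wrapped. You route the argument through Proposition \ref{characterization}: you check its hypotheses ($x$ irrational gives $r^n \neq 1$ for all $n$, so $\lambda = 1$ is admissible) and then negate condition (ii), handling the quantifiers correctly by fixing the test parameter equal to the $\alpha$ of the hypothesis, so that the supremum diverges for \emph{every} $\beta \in\, ]\alpha,1[$. The paper instead argues directly by contradiction without invoking that proposition: if $1 \notin \sigma(C_{\varphi_r}, H_0(\D))$, then $C_{\varphi_r} - I$ is surjective, so the specific function $\sum_{n\ge 1} z^n$ has preimage $\sum_{n \ge 1} \frac{z^n}{r^n - 1}$, and mere membership of this series in $H_0(\D)$ (radius of convergence at least $1$) already gives $\sup_n \beta^n/|r^n-1| \le M$ for each $\beta \in\, ]\alpha, 1[$, which is then contradicted since $(\beta/\alpha)^{q_j} \le 2\pi M q_j$ cannot hold for all $j$. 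Consequently the paper's version needs only the failure of surjectivity plus the elementary Taylor-coefficient bound, whereas your version relies on the implication (i) $\Rightarrow$ (ii) of Proposition \ref{characterization}, whose proof in the paper uses continuity of the inverse (closed graph theorem) and the seminorm description of the topology of $H_0(\D)$. Both are legitimate: yours is structurally cleaner in that it reuses a result already established instead of re-deriving a special case of it, while the paper's is self-contained and marginally more elementary.
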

\begin{proof}
The complex number $r:=e^{i 2 \pi x}$ satisfies, for each $j \in \N$,
$$
|r^{q_j} -1| = |e^{i 2 \pi q_j x} - e^{i 2 \pi p_j}| \leq 2 \pi |q_j x - p_j| \leq 2 \pi q_j \alpha^{q_j}.
$$
Proceeding by contradiction, assume that $1 \notin \sigma(C_{\varphi_r}, H_0(\D))$. Then $C_{\varphi_r} -  I: H_0(\D) \rightarrow H_0(\D)$ is surjective. Given $\sum_{n=1}^{\infty} z^n \in H_0(\D)$, its pre-image
$\sum_{n=1}^{\infty} \frac{1}{r^n - 1} z^n$ belongs to $H_0(\D)$. In particular, if we select any $\beta \in ]\alpha,1[$, there is $M>0$ such that $\frac{\beta^n}{|r^n - 1 |} \leq M$ for each $n \in \N$. This yields, for each $j \in \N$,
$$
\beta^{q_j} \leq M|r^{q_j} -1| \leq 2 \pi M q_j \alpha^{q_j}.
$$
Hence $(\beta/\alpha)^{q_j} \leq 2 \pi M q_j$ for each $j \in \N$. This is impossible, since $1 < \beta/\alpha$ and  $q_j$ tends to infinity.
\end{proof}

Here is the construction of irrational numbers $x$ satisfying the assumptions of Lemma \ref{lemma1}, that is, with very fast Diophantine approximation.

\begin{proposition}\label{construction}
For each $m \in \N, m \geq 2,$ there is a positive real number $x(m)$ and  sequences of positive numbers $(p_j)_{j=1}^{\infty} $ and $(q_j)_{j=1}^{\infty}$ with $\lim_{j \rightarrow \infty} q_j = + \infty$ such that
$$
\left| x(m) - \frac{p_j}{q_j}  \right| \leq \big( \frac{1}{m} \big)^{q_j}
$$
\end{proposition}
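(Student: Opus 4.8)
The plan is to realise $x(m)$ as a lacunary (Liouville-type) series whose partial sums supply the required rational approximations. Concretely, I would set $x(m) := \sum_{k=1}^{\infty} m^{-a_k}$ for a strictly increasing sequence of positive integers $(a_k)_{k=1}^{\infty}$, to be chosen growing fast enough, and read off the approximants from the partial sums. Putting the $j$-th partial sum over a common denominator gives the rational number
$$
\frac{p_j}{q_j} := \sum_{k=1}^{j} m^{-a_k}, \qquad q_j := m^{a_j}, \qquad p_j := \sum_{k=1}^{j} m^{\,a_j - a_k} \in \N ,
$$
so that $q_j = m^{a_j} \to +\infty$ as $j \to \infty$ automatically, and $x(m)$ is a well-defined positive real number since the series converges.

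Next I would estimate the approximation error by the tail of the series. As $(a_k)$ is strictly increasing and $m \geq 2$, a geometric comparison yields
$$
0 < x(m) - \frac{p_j}{q_j} = \sum_{k=j+1}^{\infty} m^{-a_k} \leq m^{-a_{j+1}} \sum_{i=0}^{\infty} m^{-i} = \frac{m}{m-1}\, m^{-a_{j+1}} \leq 2\, m^{-a_{j+1}} .
$$
Hence it suffices to arrange $2\, m^{-a_{j+1}} \leq (1/m)^{q_j} = m^{-m^{a_j}}$, i.e. $a_{j+1} \geq m^{a_j} + \log_m 2$. Since $\log_m 2 \leq 1$ for $m \geq 2$, this is secured by the recursion $a_1 := 1$, $a_{j+1} := m^{a_j} + 1$, which defines a strictly increasing sequence tending to infinity. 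With this choice $2\, m^{-a_{j+1}} = (2/m)\, m^{-m^{a_j}} \leq m^{-m^{a_j}}$, giving exactly the desired bound $|x(m) - p_j/q_j| \leq (1/m)^{q_j}$ for every $j \in \N$.

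The construction involves no genuine obstacle beyond matching the two exponential scales correctly: the error is governed by $m^{-a_{j+1}}$, whereas the target $(1/m)^{q_j}$ is of order $m^{-m^{a_j}}$, which is doubly exponential in $a_j$. The only point to get right is therefore the recursion, which must force each $a_{j+1}$ to overtake $m^{a_j}$, so that $(a_k)$ grows like an iterated exponential tower. Finally, although the statement asks only for the approximation property, I would note that these inequalities already force $x(m)$ to be irrational: a rational $a/b$ equal to $x(m)$ would satisfy $|a/b - p_j/q_j| \geq 1/(b\,q_j)$ for all $j$ (the numerator $a q_j - b p_j$ being a nonzero integer, as $p_j/q_j < x(m)$ strictly), which is incompatible with the decay $(1/m)^{q_j}$. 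Thus $x(m)$ meets the hypotheses of Lemma \ref{lemma1} with $\alpha = 1/m \in \,]0,1[$.
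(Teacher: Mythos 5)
Your proposal is correct and follows essentially the same route as the paper: a Liouville-type lacunary series whose partial sums serve as the rational approximants, with the tail estimated by a geometric comparison and a recursion forcing the denominators to grow doubly exponentially (the paper takes $x(m)=\sum_{j}1/q_j$ with $q_1=m$, $q_{j+1}=q_j^{q_j}$, relying on the divisibility $q_j \mid q_{j+1}$, where you use pure powers $q_j=m^{a_j}$ with $a_{j+1}=m^{a_j}+1$, which makes the common-denominator step automatic). Your closing observation that the approximation inequalities force $x(m)$ to be irrational is a worthwhile addition: irrationality is needed to apply Lemma \ref{lemma1}, and while the paper's prose announces the construction of \emph{irrational} numbers, its proof of the proposition never verifies this.
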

\begin{proof}
Given $m \in \N, m \geq 2,$ define inductively $q_1:=m$ and $q_{j+1}:= (q_j)^{q_j}$ for $j \in \N$. Clearly $q_j \geq m^j$ and $q_j$ is a divisor of $q_{j+1}$ for each $j \in \N$.
We set $x(m):= \sum_{j=1}^{\infty} 1/q_j$. The series converges, and each partial sum $\sum_{j=1}^{k} 1/q_j$ is a rational number which can be written of the form $p_k/q_k$ for some $p_k \in \N$. Moreover, we have
$$
\left|x(m) - \frac{p_k}{q_k}\right| = \sum_{j=k+1}^{\infty} \frac{1}{q_j} \leq \ \frac{1}{q_{k+1}} \sum_{j=0}^{\infty} \left(\frac{1}{m}\right)^j \leq \frac{2}{q_{k+1}}.
$$
Since $2 m^{q_k} \leq q_{k+1}$ for each $k \in \N$, we have $2/q_{k+1} \leq (1/m)^{q_k}$ for each $k \in \N$.Therefore the rational numbers $p_k/q_k, k \in \N$ satisfy
$$
\left| x(m) - \frac{p_k}{q_k}  \right| \leq \left( \frac{1}{m} \right)^{q_k}
$$
for all $k \in \N$.
\end{proof}

\begin{theorem}\label{main}
There exist $r,s \in \T$ such that the corresponding rotations $\varphi_r$ and $\varphi_s$ satisfy $1 \notin \sigma(C_{\varphi_r}, H_0(\D))$ and $1 \in \sigma(C_{\varphi_s}, H_0(\D))$.
\end{theorem}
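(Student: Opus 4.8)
The plan is to obtain both numbers $r$ and $s$ by specializing the results already established for concrete choices of the angle $x \in \R$, so that Theorem \ref{main} becomes a direct corollary of the preceding propositions. The whole argument reduces to exhibiting one irrational $x$ of each of the two relevant types and then invoking the corresponding criterion.

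For the first number, I would take any irrational algebraic number $x$ of degree at least $2$; the cleanest explicit choice is $x := \sqrt{2}$ (or indeed any quadratic irrational). By Liouville's theorem, as recalled in the excerpt, every such $x$ is Diophantine. Setting $r := e^{i 2\pi x}$, Corollary \ref{diophantine} immediately yields $1 \notin \sigma(C_{\varphi_r}, H_0(\D))$. Note that $x$ irrational guarantees $r^n \neq 1$ for all $n \in \N$, so the standing hypothesis of that corollary is met.

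For the second number, I would apply Proposition \ref{construction} with, say, $m = 2$ to produce the real number $x(2) = \sum_{j=1}^\infty 1/q_j$ together with its rapidly converging rational approximants $p_k/q_k$ satisfying $|x(2) - p_k/q_k| \le (1/2)^{q_k}$. This $x(2)$ is irrational, precisely because it admits rational approximations faster than any Diophantine bound permits, so it cannot satisfy the Liouville inequality for algebraic numbers and in particular is not rational. Taking $\alpha := 1/2$ in Lemma \ref{lemma1}, the hypotheses of that lemma are exactly the conclusion of Proposition \ref{construction}, and hence $s := e^{i 2\pi x(2)}$ satisfies $1 \in \sigma(C_{\varphi_s}, H_0(\D))$. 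Because $x(2)$ is irrational, $s$ is again not a root of unity, so $s \in \T$ has the required form.

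There is essentially no obstacle here: the technical work has been front-loaded into Proposition \ref{characterization}, Corollary \ref{diophantine}, Lemma \ref{lemma1}, and Proposition \ref{construction}, and the theorem simply assembles them. The only point requiring a word of care is verifying that the constructed $x(2)$ is genuinely irrational (so that the relevant $r^n \neq 1$ conditions hold and $s$ is not a root of unity); this follows since a number with approximation error bounded by $(1/2)^{q_k}$ decaying faster than any power of $1/q_k$ violates the lower Diophantine bound that every rational (and indeed every Diophantine number) must obey.
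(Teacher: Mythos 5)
Your proof is correct and follows essentially the same route as the paper: a Diophantine angle (via Corollary \ref{diophantine}) supplies $r$, and the number from Proposition \ref{construction} fed into Lemma \ref{lemma1} with $\alpha = 1/2$ supplies $s$. Your explicit check that $x(2)$ is irrational (needed since Lemma \ref{lemma1} assumes irrationality, and guaranteed because the partial sums $p_k/q_k$ are distinct from $x(2)$ while any rational $a/b$ satisfies $|a/b - p/q| \geq 1/(bq)$ for $p/q \neq a/b$) is a worthwhile addition that the paper's own proof leaves implicit.
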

\begin{proof}
Take $r=e^{i 2 \pi x}$ for a Diophantine number $x \in \R$ and apply  Corollary \ref{diophantine} to conclude that $1 \notin \sigma(C_{\varphi_r}, H_0(\D))$. On the other hand, by  Proposition \ref{construction} and Lemma \ref{lemma1} we find numbers $s \in \T$ with $1 \in \sigma(C_{\varphi_s}, H_0(\D))$.
\end{proof}

\textbf{Acknowledgement.} The research of this paper was partially
supported by the projects MTM2016-76647-P and GV Prometeo/2017/102.




\noindent \textbf{Author's address:}%
\vspace{\baselineskip}%

Instituto Universitario de Matem\'{a}tica Pura y Aplicada IUMPA,
Universitat Polit\`{e}cnica de Val\`{e}ncia,  E-46071 Valencia, SPAIN

email:jbonet@mat.upv.es

\end{document}